            \def\es{\emptyset}
\def\ZZ{\mathbb{Z}}
\def\CC{\mathbb{C}}
\def\RR{\mathbb{R}}
\newcommand{\diam}{{\rm diam}\,}
\newcommand{\Crit}{{\rm Crit}\:}
\newtheorem{theo}{Theorem}
\newtheorem{lem}{Lemma}
\newtheorem{fact}{Fact}
\newtheorem{prop}{Proposition}
            \def\es{\emptyset}
\title{No Finite Invariant Density  for Misiurewicz Exponential Maps}
\author{Janina Kotus}
\address{Faculty of
 Mathematics and Information Science,
Warsaw University of Technology, 00-661 Warsaw, Poland}
\email{J.Kotus@impan.gov.pl}
\author{ Grzegorz \'{S}wia\c\negthinspace tek } \address{Department  of
Mathematics, Penn State University, University Park, PA 16802, USA}
\email{swiatek@math.psu.edu}
\thanks{MSC 37F10\\
The first author is partially supported by a grant {\it Chaos, fraktale i dynamika
konforemna} - N N201 0222 33}
\begin{document}

\begin{abstract}
For exponential mappings such that the orbit of the only singular
value $0$ is bounded, it is shown that no integrable density invariant
under the dynamics exists on $\CC$.  
\end{abstract}

\maketitle

\section{Introduction}
We consider one parameter  family of exponential functions
$f_\Lambda(z)=\Lambda e^z$, $z\in \mathbb{\CC}$, $ \Lambda  \in
\CC^*$. These maps have only one finite  singular value $0$ whose
forward trajectory determines the dynamics on $\CC$. From now we
assume that the orbit of the  asymptotic value $0$ is bounded, hence
the Julia set $J(f_\Lambda)=\CC$. Thus $f$ satisfies so called
Misiurewicz condition  i.e. the post-singular set
$P(f):=\overline{\bigcup_{n=0}^\infty f^n_\Lambda(0)}$ is bounded
and $P(f) \cap \Crit(f)=\es$. It follows from {\cite[Th.1]{GKS}}
that $P(f)$ is hyperbolic. The problem of existence of probabilistic
invariant measure absolutely continuous with respect to the Lebesgue
measure (abbr. \emph{pacim}) for transcendental meromorphic
functions satisfying Misiurewicz condition was discussed in
\cite{KS1}. However this  result cannot be applied to entire
functions. It is still an open problem whether the  simplest entire
functions like exponential map $z\to 2\pi i \exp(z) $ have
\emph{pacim}. The main result of this paper is the following
theorem.
\begin{theo}\label{theo:10zp,1}
Let $f(z)=\Lambda\exp(z)$ with $\Lambda\in \CC \setminus \{0\}$ chosen so that
the Julia set is the entire sphere and the orbit of $0$ under $f$ is bounded.
Then $f$ admits no probabilistic invariant measure absolutely
continuous with respect to the Lebesgue measure.
\end{theo}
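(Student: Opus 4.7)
The plan is a proof by contradiction that combines the Perron--Frobenius equation for the density, the infinite-to-one geometry of $f$, and the hyperbolicity of $P(f)$.

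Suppose $\mu = \rho\,dm$ is an $f$-invariant probability measure with $\rho \in L^1(\CC)$. Since $f(z) = \Lambda e^z$ has no critical points, invariance $\mu(f^{-1}A) = \mu(A)$ translates pointwise into the transfer equation
$$|z|^2\rho(z) = \sum_{n\in\ZZ}\rho\bigl(\log(z/\Lambda) + 2\pi in\bigr) =: \Psi\bigl(\log(z/\Lambda)\bigr),$$
using that the preimages of $z \neq 0$ are $\{\log(z/\Lambda) + 2\pi in\}_{n\in\ZZ}$, all with derivative of modulus $|z|$. Because $\Psi$ is $2\pi i$-periodic, it descends to a non-negative $L^1$ function on the cylinder $C = \CC/2\pi i\ZZ$ with $\int_C \Psi\,dA = \int_\CC \rho\,dm = 1$. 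Alongside this, the elementary identity $f^{-1}(B(0,R)) = \{w : \Re(w) < \log(R/|\Lambda|)\}$ yields the marginal coupling $\mu(\{|z|<R\}) = \mu(\{\Re(z) < \log(R/|\Lambda|)\})$, relating the modulus and real-part marginals of $\rho$.

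I would then use the hyperbolicity of $P(f)$ from \cite{GKS} to set up, on simply connected subsets of $\CC \setminus U$ for a small neighborhood $U \supset P(f)$, inverse branches of $f^n$ that contract uniformly and satisfy Koebe distortion bounds. A recurrence argument, pushing the transfer equation through such backward iterates and starting from a bounded piece of $\CC$ where $\mu$ has positive mass, should produce a uniform lower bound $\Psi(v) \geq c > 0$ on some set $E \subset C$ of infinite cylinder area. The Misiurewicz hypothesis enters in two distinct ways: hyperbolicity of $P(f)$ rules out concentration of $\rho$ onto $P(f)$, while the fact that $0$ is the asymptotic value---so $f^{-1}$ of any disk around $0$ is an entire left half-plane---forces $\rho$ to propagate mass to arbitrarily negative real parts.

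The contradiction then follows from the change of variables $z = \Lambda e^v$. On the cylinder, $v \mapsto \Lambda e^v$ is a bijection from a fundamental strip to $\CC^*$, with $dm(z) = |z|^2\,dA(v)$, so
$$1 = \int_\CC \rho\,dm \geq \int_{\Lambda e^E} \frac{\Psi(\log(z/\Lambda))}{|z|^2}\,dm(z) = \int_E \Psi(v)\,dA(v) \geq c\cdot\operatorname{area}(E) = \infty.$$
The main obstacle, as I see it, is producing the set $E$: the transfer equation by itself is consistent with $\Psi$ being concentrated on a compact piece of $C$, and extracting the infinite-area lower bound requires the Misiurewicz condition to be used in a genuinely dynamical way, through the combination of $P(f)$-hyperbolicity and the half-plane pull-back structure near the asymptotic value.
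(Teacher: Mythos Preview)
Your framework is sound: the transfer identity $|z|^2\rho(z)=\Psi(\log(z/\Lambda))$ with $\Psi\in L^1(C)$, $\int_C\Psi=1$, is correct, and exhibiting $E\subset C$ of infinite area with $\Psi\geq c$ would indeed finish the argument.  But the proof is not there --- you yourself flag the production of $E$ as ``the main obstacle'' and then do not resolve it.  The sketch you give (``pushing the transfer equation through backward iterates'' using Koebe and hyperbolicity of $P(f)$) does not, as stated, yield what you need: if $\rho\geq c_0$ on a bounded set $D$ and $g$ is an inverse branch of $f^k$ landing in $D$, the transfer equation gives only $\rho(z)\geq c_0|g'(z)|^2$, and since $|(f^k)'(w)|=\prod_{i=1}^{k}|f^i(w)|$ with last factor $|z|$, one gets $\Psi(v)\geq c_0/\prod_{i=1}^{k-1}|f^i(w)|^2$, which decays in $k$ unless you control the intermediate orbit very carefully.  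You give no mechanism for that control, nor for why the resulting $E$ would have infinite area; the half-plane pull-back observation is suggestive but not turned into an estimate.

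The paper takes a different and more direct route.  It first proves (Lemma~\ref{lem:10zp,1}, via a general Lemma~\ref{lem:13zp,2}) that $\rho$ is essentially bounded below on each slit annulus $A_n$, so that $\mu$ and $\lambda$ are comparable there.  Then, instead of working with $\Psi$, it invokes Kac: the first-return time $\tau$ to $A_n$ must be $\mu$-integrable, hence $\lambda$-integrable.  The contradiction comes from a combinatorial construction (Lemmas~\ref{lem:10zp,2}--\ref{lem:10zp,3}, Proposition~\ref{prop:13za,1}): inside $A_n$ one builds, for each $p\geq 1$, a set $V_p$ of $\lambda$-measure $\gtrsim K_0^{-p}$ on which the orbit stays at modulus $\geq|\Lambda|e^n$ for $p$ steps and then lands at distance $\leq g(-g^p(|\Lambda|e^n))$ from $0$, where $g(x)=|\Lambda|e^{x/2}$.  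Escaping a disk of radius $r$ about $0$ takes $\gtrsim\log r^{-1}$ iterates, so $\tau\gtrsim g^p(|\Lambda|e^n)$ on $V_p$.  Since $g^p$ grows super-exponentially (Lemma~\ref{lem:13za,1}) while $\lambda(V_p)$ decays only exponentially, $\int_{A_n}\tau\,d\lambda=\infty$.  The quantitative heart of the argument is exactly this disparity between iterated-exponential escape times and merely exponential measure loss; your proposal contains no analogue of it.
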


However  these  maps have  $\sigma$-finite  invariant measure
absolutely continuous with respect to the Lebesgue measure (see
\cite{KU1}).  A result similar to Theorem~\ref{theo:10zp,1} has been
mentioned to us by other authors, \cite{DS}.

\

 The proof will proceed by contradiction, so we suppose
that such a measure exists and call it $\mu$, while reserving
$\lambda$ for the Lebesgue measure of the plane.  It follows from
\cite{EL} that the set of points escaping to $\infty$ has zero
Lebesgue's measure for  every map in our family. It is not difficult
to prove that for these functions the union $P(f)\cup\{\infty\}$  is
not a metric attractor in sense of Milnor with respect to the
measure $\lambda $ on $\CC$. The results of \cite{Bo} implies that
$f_{\Lambda}$ is ergodic with respect  to $\lambda$. Thus
\begin{fact}\label{fa:10zp,1}
The measure $\mu$ is ergodic.
\end{fact}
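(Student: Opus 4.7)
The plan is to derive Fact~\ref{fa:10zp,1} from two ingredients already invoked above: the absolute continuity $\mu\ll\lambda$ built into the hypothesis on $\mu$, and the ergodicity of $f_\Lambda$ with respect to $\lambda$ supplied by \cite{Bo}. I argue by contradiction. Suppose $\mu$ is not ergodic, so that some Borel set $A\subset\CC$ satisfies $\mu(f^{-1}(A)\triangle A)=0$ and $0<\mu(A)<1$.

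The first step is to replace $A$ by a representative that is strictly (not merely mod $\mu$) invariant. Set
\[
A'=\liminf_{n\to\infty} f^{-n}(A)=\bigcup_{N\ge 0}\bigcap_{n\ge N} f^{-n}(A).
\]
A direct manipulation of preimages gives $f^{-1}(A')=A'$ exactly. Because $\mu$ is $f$-invariant, $\mu(f^{-n}(A)\triangle A)=0$ for every $n\ge 0$, so $A=A'$ mod $\mu$; in particular $0<\mu(A')<1$.

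The second step is to transport this information to $\lambda$. From $\mu\ll\lambda$ and $\mu(A')>0$ one gets $\lambda(A')>0$, and from $\mu(\CC\sms A')=1-\mu(A')>0$ one gets $\lambda(\CC\sms A')>0$. Thus $A'$ is a strictly $f$-invariant Borel set with $\lambda(A')>0$ and $\lambda(\CC\sms A')>0$, contradicting the ergodicity of $f$ with respect to $\lambda$ cited from \cite{Bo}.

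The only genuine point to verify is that the theorem from \cite{Bo} is indeed applicable in our Misiurewicz setting; that is the role of the two preceding sentences in the excerpt, which check the two hypotheses needed to invoke it, namely that the escaping set is $\lambda$-null by \cite{EL} and that $P(f)\cup\{\infty\}$ is not a Milnor attractor for $\lambda$. Once those inputs are granted, the present statement is a standard measure-theoretic transfer of ergodicity from a reference measure to an absolutely continuous invariant companion, and carries no further analytic content.
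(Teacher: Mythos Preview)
Your proposal is correct and follows the same approach as the paper: the paper derives $\lambda$-ergodicity of $f_\Lambda$ from \cite{EL} and \cite{Bo} in the sentences preceding Fact~\ref{fa:10zp,1} and then simply writes ``Thus'', leaving the transfer from $\lambda$-ergodicity to $\mu$-ergodicity via $\mu\ll\lambda$ implicit. You have supplied exactly that omitted standard argument, and your construction of the strictly invariant representative $A'=\liminf_n f^{-n}(A)$ is the usual way to do it.
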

\section{Proof}
For a positive integer $n$ write $A_n := \{ z :\: |\Lambda| e^n <
|z| \leq |\Lambda| e^{n+1},\, \arg z \neq \arg \Lambda\}$. A {\em
fundamental rectangle} will refer to any set in the form $\{x+2\pi
iy :\: k<x<k+1, l<y<l+1\}$ for integers $k,l$. Thus, any fundamental
rectangle is mapped with bounded distortion and onto some $A_n$.
\begin{lem}\label{lem:10zp,1}
For all $n\in \ZZ_+$, $\inf{\rm ess} \{\frac{d\mu}{d\lambda}(z) :\:
z\in A_n \} > 0$.
\end{lem}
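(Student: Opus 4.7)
The plan is to combine the transfer operator identity, coming from $f_\ast(\rho\,d\lambda)=\rho\,d\lambda$, namely
$$\rho(z)=\frac{1}{|z|^2}\sum_{l\in\ZZ}\rho\bigl(\log(z/\Lambda)+2\pi il\bigr),$$
with the bounded distortion of $f$ on fundamental rectangles, and the hyperbolicity of $P(f)$ coming from the Misiurewicz condition.

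First, since $\rho\in L^1(\lambda)$ with $\int\rho\,d\lambda=1$, the Lebesgue density theorem produces a reference ball $B_0\subset\CC$, chosen disjoint from $P(f)$ and from the singular ray $\{\arg z=\arg\Lambda\}$, on which $\rho\geq c_0>0$ essentially for some constant $c_0>0$. Ergodicity of $\mu$ (Fact~\ref{fa:10zp,1}) provides enough freedom in placing $B_0$ in a convenient region of $\CC$.

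Next I would use the iterated transfer identity
$$\rho(z)=\sum_{w\in f^{-k}(z)}\frac{\rho(w)}{|(f^k)'(w)|^2}\geq\frac{c_0}{|(f^k)'(w_0)|^2},$$
valid whenever $w_0\in B_0$ is one of the $f^k$-preimages of $z\in A_n$. The lemma then reduces to exhibiting, for $\lambda$-a.e.\ $z\in A_n$, such a preimage $w_0\in B_0$ with $|(f^k)'(w_0)|$ bounded by a constant depending only on $n$. The tool is the Misiurewicz hyperbolicity: Koebe-type arguments deliver inverse branches of $f^k$ with uniformly bounded distortion on a definite scale, provided the orbit $f(w_0),\dots,f^k(w_0)=z$ stays at bounded distance from $P(f)$. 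Combined with the explicit exponential geometry (every fundamental rectangle maps surjectively onto some $A_m$ with bounded distortion), one chains pullbacks from $B_0$ through a controlled number of fundamental rectangles to reach any rectangle $R_{n,l}$ mapping onto $A_n$, and hence a neighborhood of any $z\in A_n$.

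The main obstacle is this last construction, carried out uniformly in $z\in A_n$: one must arrange the chain of inverse branches to stay at a definite distance from $P(f)$ so that Koebe distortion applies, while simultaneously controlling the total derivative $|(f^k)'(w_0)|$ uniformly in the target. This is a careful synthesis of the bounded distortion on each individual fundamental rectangle with the Koebe control guaranteed by the hyperbolicity of $P(f)$. For the finitely many small values of $n$ for which $A_n$ may approach $P(f)$, a separate ad hoc treatment (e.g., a direct argument using only the single-step transfer identity restricted to a convenient partition of $A_n$) handles the issue.
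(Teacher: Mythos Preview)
Your reduction is the right one: once a ball $B_0$ with $\rho\geq c_0>0$ essentially is in hand, the transfer identity together with the fact that forward iterates of any open set eventually cover $A_n$ (since $J(f)=\CC$) gives the essential lower bound on $A_n$ with a uniform derivative bound; this is exactly how the paper reduces Lemma~\ref{lem:10zp,1} to Lemma~\ref{lem:13zp,2}. The gap is your first step. The Lebesgue density theorem does \emph{not} produce a ball on which an $L^1$ density is essentially bounded away from~$0$: at a Lebesgue point one only controls averages, and there are nonnegative $L^1$ functions whose essential infimum vanishes on every ball (let $\rho$ vanish on a measurable set meeting every open set in positive but not full measure). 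Ergodicity of $\mu$ does not help either, since it concerns $\mu$-null sets, not the locus where $\rho$ is small. So the existence of $B_0$ is not a preliminary triviality but the heart of the lemma.

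The paper's Lemma~\ref{lem:13zp,2} supplies exactly this missing step, and its proof is where the real work lies. The argument does not try to locate a good ball directly; instead it fixes a disk $U$ away from $P(f)$ with $\nu(U)>0$, approximates $\rho$ in $L^1$ by a continuous compactly supported $\rho_\epsilon$ (a Luzin-type step), and considers the pullback $\rho_{\epsilon,k}(z)=\sum_{g\in G_k}\inf_{U}(\rho_\epsilon\circ g)\,|g'(z)|^2$ over the inverse branches of $F^k$ on $U$. Bounded distortion of these branches forces $\rho_{\epsilon,k}$ to have oscillation ratio at most a fixed $Q_0$ on $U$, while shrinking of preimage components (Julia set is the whole sphere) makes $\rho_{\epsilon,k}$ close to the genuine Perron--Frobenius image of $\rho_\epsilon$, hence to $\rho$ itself; one concludes $\rho\geq Q_1/2$ a.e.\ on $U$ with $Q_1$ independent of $\epsilon$. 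Your Koebe/hyperbolicity ingredients do appear, but they are used to control the pullback to $U$, not the push-forward to $A_n$, and they cannot replace the Luzin approximation that bridges the gap between an arbitrary $L^1$ density and one with a pointwise lower bound.
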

\begin{proof}
By~{\cite[Th.1]{GKS}},  the post-singular set $P(f)$ has area $0$,
so it cannot be the support of $\mu$. Additionally, the image of
every open set covers $A_n$ after finitely many iterations, so it
suffices to have the $\frac{d\mu}{d\lambda}$ essentially bounded
away from $0$ on any open set. Hence, Lemma~\ref{lem:10zp,1} follows
from the following fact.
\end{proof}
\begin{lem}\label{lem:13zp,2}
Suppose that $F$ is a meromorphic function whose Julia set is the
entire sphere, and $\nu$ a probability invariant and ergodic measure
absolutely continuous with respect to $\lambda$ and such that the
$\nu$-measure of the closure of the post-singular set of $F$ is less
than $1$. Then, there is an open set $U$ such that \[ \inf{\rm ess}
\left\{ \frac{d\nu}{d\lambda}(z) :\: z\in U\right\} > 0.\]
\end{lem}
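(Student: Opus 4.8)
The plan is to localise the problem near a round disc that lies at positive distance from $\ov{P(F)}$ — where every inverse branch of every iterate is controlled by the Koebe distortion theorem — and then to pass to the first–return map to that disc; this induced map is a conformal map with uniformly bounded distortion and full branches, and the absolutely continuous invariant density of such a map is bounded away from $0$, which is exactly what is wanted for $\nu$.

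First I would choose the disc. Write $h:=\frac{d\nu}{d\lambda}$. Since $\nu$ is a probability measure and $\nu(\ov{P(F)})<1$, the open set $\ov{\CC}\sms\ov{P(F)}$ has positive $\nu$–measure, hence positive $\lambda$–measure, and it meets $\{h>0\}$ in a set of positive $\lambda$–measure; fix a Lebesgue point $z_0$ of $h$ with $h(z_0)>0$ and $z_0\notin\ov{P(F)}$, and pick $R>0$ so small that $\ov{B(z_0,3R)}\cap\ov{P(F)}=\es$. Put $B:=B(z_0,R)$, so $\nu(B)>0$. The point of staying away from $\ov{P(F)}$ is that all critical and asymptotic values of every iterate $F^n$ lie in $\ov{P(F)}$, so $B(z_0,3R)$ contains no singular value of any $F^n$; consequently every holomorphic inverse branch of every $F^n$ extends univalently to $B(z_0,3R)$, and by Koebe's theorem each such branch has distortion on $B$ bounded by an absolute constant $C_0$ (independent of $n$ and of the branch) and carries $B$ onto a topological disc of eccentricity at most $C_0$. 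In particular $F^{-n}(B)$ is, up to a null set, the disjoint union of these quasi–discs.

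Next I would induce. Let $T$ be the first–return map of $F$ to $B$; since $\nu$ is a finite invariant measure, Poincaré recurrence makes $T$ defined $\nu$–a.e.\ on $B$, and by Kac's lemma $\nu_B:=\nu|_B/\nu(B)$ is $T$–invariant, ergodic and absolutely continuous with density $h|_B/\nu(B)$. Every inverse branch of $T$, and by composition every inverse branch of every $T^k$, is a restriction of an inverse branch of some $F^n$; and, $T$ being a first–return map, the branch images of $T^k$ cover $B$ up to a $\nu$–null set. Thus $T$ is a conformal self–map of $B$ with full branches, and one expects the invariant density of such a map, when its branches have uniformly bounded distortion, to be bounded above and below by positive constants: writing $\frac{d\nu_B}{d\lambda}$ as a Cesàro limit of the transfer–operator iterates $\mathcal L_T^{\,k}\mathbf 1$ and estimating $\mathcal L_T^{\,k}\mathbf 1(z)=\sum_{\eta}|\eta'(z)|^{2}$ (sum over the inverse branches $\eta$ of $T^k$) by $\lambda(B)^{-1}\sum_{\eta}\lambda(\eta(B))=\lambda(B)^{-1}\lambda(T^{-k}B)$, up to a multiplicative error governed only by the distortion, one gets a two–sided bound uniform in $k$. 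Then $h=\nu(B)\,\frac{d\nu_B}{d\lambda}$ is essentially bounded below on $B$, and $U:=B$ does the job.

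The hard part will be the distortion estimate for the \emph{iterated} return map: a single return branch has bounded distortion on $B$ because it extends across the fixed annulus $B(z_0,3R)\sms B$, but one must check that this Koebe room survives composition, i.e.\ that the inverse branches of $T^k$ still extend univalently to a definite neighbourhood of $\ov{B}$ with distortion bounded independently of $k$ — this needs a genuine conformal–geometry argument about the moduli of the pulled–back annuli, not merely the chain rule. A secondary point is that one needs the set of non–returning points of $B$ to be $\lambda$–null (equivalently $h>0$ $\lambda$–a.e.\ on $B$), so that $\lambda(T^{-k}B)$ stays comparable to $\lambda(B)$; this follows from the conservativity and ergodicity of $F$ with respect to $\lambda$ — here $J(F)=\ov{\CC}$ enters decisively, cf.\ \cite{EL}, \cite{Bo} — or can be built into a preliminary step showing that $\operatorname{supp}\nu$ has non-empty interior.
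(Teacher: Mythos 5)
Your route is genuinely different from the paper's.  The paper does not induce a first--return map at all: it fixes a disk $U$ away from the singular orbit with $\nu(U)>0$, considers \emph{all} inverse branches of $F^k$ landing on $U$ whose domains meet a compact set, and introduces a continuous, compactly supported Luzin approximant $\rho_\epsilon$ of the density $\rho=\tfrac{d\nu}{d\lambda}$.  The quantity $\rho_{\epsilon,k}(z)=\sum_{g}\inf_{g(U)}\rho_{\epsilon}\cdot|g'(z)|^2$ has oscillation on $U$ bounded by a distortion constant $Q_0$, and its total mass over $U$ stays $\gtrsim \nu(U)$ for $k$ large, because the discrepancy with $\sum_g \rho_\epsilon(g(z))|g'(z)|^2$ is controlled by the modulus of continuity of $\rho_\epsilon$ evaluated at $d_k=\sup\{\diam V\}$, and $d_k\to 0$ since the Julia set is the whole sphere.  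From bounded oscillation plus positive total mass one gets $\rho_{\epsilon,k}\geq Q_1>0$ pointwise, and the other Luzin inequality keeps $\rho$ from dropping much below $\rho_{\epsilon,k}$.  Note that this argument uses only that $\nu$ is ergodic and that pullbacks of $U$ shrink---no dynamical property of $\lambda$ is invoked.

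Two comments on your proposal.  First, the distortion worry you flag is in fact harmless and does not need any conformal--geometry argument about pulled-back moduli: every inverse branch of $T^k$ is, on its domain, an inverse branch of a \emph{single} iterate $F^N$ (with $N$ the sum of the $k$ return times), and since $B(z_0,3R)\cap\ov{P(F)}=\es$ that branch extends univalently to all of $B(z_0,3R)$; Koebe therefore gives one distortion constant on $B$ valid for every $k$ and every branch.  Second--and this is the real gap--the transfer--operator step is substantially underdetermined.  Bounding $\mathcal L_T^{\,k}\mathbf 1$ above and below does not by itself bound $h_B=\tfrac{d\nu_B}{d\lambda}$: the invariance relation $h_B=\mathcal L_T^{\,k}h_B$ involves the unknown, merely $L^1$, values $h_B(\eta(z))$, which can oscillate arbitrarily within a branch image, so the pointwise distortion estimate does not transfer to $h_B$.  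Passing from $\mathcal L_T^{\,k}\mathbf 1\asymp 1$ to an a.e.\ bound on $h_B$ requires a Chacon--Ornstein/Hopf-type a.e.\ Ces\`aro convergence of $\mathcal L_T^{\,k}\mathbf 1$ to a multiple of $h_B$, and that in turn presupposes that $T$ is defined $\lambda$-a.e., that the non-returning set is $\lambda$-null, and more generally that $F$ is conservative and ergodic for $\lambda$.  None of this is among the hypotheses of the lemma, which is stated for an arbitrary meromorphic $F$ with $J(F)=\ov{\CC}$ and only assumes ergodicity of $\nu$.  (In the paper's ultimate application these facts are available for exponential maps, via the cited references, but the lemma is proved without them.)  The paper's Luzin step is precisely the device that bypasses the oscillation of the unknown density on shrinking branch images, by trading $\rho$ for a continuous proxy whose oscillation on cells of diameter $d_k$ is visibly small; your sketch needs either that device or a fully justified a.e.\ ratio-ergodic argument together with the added $\lambda$-conservativity assumption.
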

\begin{proof}
Fix $U$ to be a disk in a positive distance from the orbit of $0$
and such that  $\eta := \nu(U)$ is positive. Denote $\rho(z) :=
\frac{d\nu}{d\lambda}$. Pick $\epsilon>0$. In the argument to follow
it is important distinguish between parameters that do or do not
depend on $\epsilon$.\\

\paragraph{\bf A variant of Luzin's Theorem.}
For every $\epsilon>0$, we can find a continuous
function with compact support
$\rho_{\epsilon} :\: \CC \rightarrow [0,+\infty)$ such that
\begin{equation}\label{equ:13zp,4}
\int_{\CC} (\rho_{\epsilon}(w) -\rho(w))_+\, d\lambda(w) < \epsilon
\end{equation}
where the plus subscript denote the positive part,
\begin{equation}\label{equ:13zp,1}
\int_{\CC} \min (\rho_{\epsilon}(z), \rho(z))\, d\lambda(z) \geq 1 - \eta/10 \; .
\end{equation}
This statement follows from introductory measure theory.\\

\paragraph{\bf Proof of Lemma~\ref{lem:10zp,1} continued.}
Now for any $k$ consider the set $\Omega_k$ of connected components
of $F^{-k}(U)$ which intersect the support of $\rho_{\epsilon}$.
 If $V\in \Omega_k$, then $F^k$ maps $V$ onto $U$
univalently and with distortion bounded depending solely on $U$.
Denote $d_k = \sup \{ \diam V :\: V\in \Omega_k \}$. Since the Julia
set is the whole sphere, $\lim_{k\rightarrow \infty} d_k = 0$. Let
$G_k$ denote the set of inverse branches of $F^k$ defined on $U$.
For $z$ in $U$ \[ \rho_{\epsilon,k}(z) = \sum_{g\in { G}_k} \inf \{
\rho_{\epsilon}(w) :\: w=g(z), z\in U\} |g'(z)|^2\;.\] For any $g$,
the ratio of the values of each summand at two points $z_1, z_2$ is
equal to the ratio of $|g'|^2$ at these points, hence bounded above
by some $Q_0\geq 1$ which depends solely on the distortion of
inverse branches and therefore only on $U$. Consequently,
\begin{equation}\label{equ:13zp,3}
 \frac{\rho_{\epsilon,k}(z_1)}{\rho_{\epsilon,k}(z_2)} \leq Q_0
\end{equation}
for every $z_1, z_2\in U$. Consider a similarly constructed
\[\tilde{\rho}_{\epsilon}(z) =  \sum_{g\in { G}_k}
\rho_{\epsilon}(g(z)) |g'(z)|^2\;.\] By the change of variable
formula
\[ \int_U \tilde{\rho}_{\epsilon}(z)\, d\lambda(z) =
\int_{F^{-k}(U)} \rho_{\epsilon}(w)\, d\lambda(w) \geq
\int_{F^{-k}(U)} \min(\rho(w),\rho_{\epsilon}(w))\, d\lambda(w) = \]
\[ =\int_{\CC} \min(\rho(w),\rho_{\epsilon}(w))\, d\lambda(w) -
\int_{F^{-k}(U)^c} \min(\rho(w),\rho_{\epsilon}(w))\, d\lambda(w) \geq \]
\begin{equation}\label{equ:14za,1}
\geq 1 - \eta/10 - \nu(F^{-k}(U)^c) = 1  - \eta/10 - (1 - \eta) =
\frac{9}{10}\eta
\end{equation}
where we have also used condition~(\ref{equ:13zp,1}). Clearly,
$\rho_{\epsilon,k} \leq \tilde{\rho}_{\epsilon}$. Let
$\delta_{\epsilon}$ denote the modulus of continuity of
$\rho_{\epsilon}$. Then \[\int_U \left( \tilde{\rho}_{\epsilon}(z) -
\rho_{\epsilon,k}(z)\right) \, d\lambda(z) \leq
\delta_{\epsilon}(d_k) \int_U \sum_{g\in { G}'_k} |g'(z)|^2 \,
d\lambda(z)\;.\] Here ${ G}'_k$ denoted the set of only those
inverse branches which map onto some $V\in \Omega_k$. By bounded
distortion, if $g$ maps on $V$, then for any $z\in U$, $|g'(z)|^2
\leq Q_0 \frac{\lambda(V)}{\lambda(U)}$. Hence, we can further
estimate \[\int_U \left( \tilde{\rho}_{\epsilon}(z) -
\rho_{\epsilon,k}(z)\right) \, d\lambda(z) \leq
\delta_{\epsilon}(d_k) \lambda(U)^{-1} \sum_{V\in
  \Omega_k}\lambda(V)\;.\]
Since all $V\in \Omega_k$ must touch the compact support of
  $\rho_{\epsilon}$ and their diameters tend uniformly to $0$ with
  $k$, their joint area remains bounded depending solely on
  $U,\epsilon$. Since also $d_k$ tend to $0$ with $k$, for all $k\geq
  k(\epsilon)$,
\[\int_U \left( \tilde{\rho}_{\epsilon}(z) -
\rho_{\epsilon,k}(z)\right) \, d\lambda(z) \leq \frac{2}{5} \eta \;.
\] Taking into account estimate~(\ref{equ:14za,1}), for $k\geq
k(\epsilon)$, $ \int_U \rho_{\epsilon,k}(z)\, d\lambda(z) \geq
\eta/2\;$. Based on estimate~(\ref{equ:13zp,3}), we conclude that
for all $k\geq k(\epsilon)$,
\begin{equation}\label{equ:13zp,5}
\rho_{\epsilon, k}(z) \geq Q_1 > 0
\end{equation}
for all $z\in U$ and $Q_1$ which only depends on $U$. Next, we
estimate \[\int_U (\rho_{\epsilon,k}(z)-\rho(z))_+\, d\lambda(z)
\leq \int_U (\tilde{\rho}_{\epsilon}(z)-\rho(z))_+\, d\lambda(z) =
\int_{\CC} (\rho_{\epsilon}(w)-\rho(w))_+\, d\lambda(w) < \epsilon\]
where we used a change of variables formula and
condition~(\ref{equ:13zp,4}). For every $\epsilon>0$ and $k\geq
k(\epsilon)$, we conclude from this and estimate~(\ref{equ:13zp,5})
that $ \rho(z) < \frac{Q_1}{2}$ on a set $\lambda$-measure less than
$\frac{2\epsilon}{Q_1}$. Since $\epsilon$ can be made arbitrarily
small while $Q_1$ is fixed, then  $\rho(z) \geq \frac{Q_1}{2}$ on a
set of full $\lambda$-measure in $U$.
\end{proof}
\subsection{Return times}
Introduce the following function $g :\: \RR \rightarrow \RR$: $g(x)
= |\Lambda| \sqrt{e^x}$.
\begin{lem}\label{lem:10zp,2}
There exists $N_0$ such that for all $n\geq N_0$, there exist sets
$W_+, W_-\subset A_n$ which consist of fundamental rectangles each
of which is mapped by $f$ onto some $A_m \subset \{ z\in\CC :\: |z|
\geq g(|\Lambda|e^n)\}$ in the case of $W_+$, $A_m\subset \{ z\in\CC
:\: |z| \leq g(-|\Lambda|e^n)\}$ for $W_-$ and such that
\[\lambda(W_{\pm}) > \frac{1}{4} \lambda(A_n) \;.\]
\end{lem}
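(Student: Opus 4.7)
The plan is to identify $W_\pm$ explicitly and then reduce the area estimate to a polar-coordinate computation on an annular sector. Writing $r := |\Lambda|e^n$, and recalling $g(r) = |\Lambda|\exp(r/2)$, a fundamental rectangle $R_{k,l}$ is mapped by $f$ bijectively onto $A_k$, so $f(R_{k,l})\subset\{|z|\geq g(r)\}$ if and only if $k\geq r/2$, and analogously $f(R_{k,l})\subset\{|z|\leq g(-r)\}$ if and only if $k+1\leq -r/2$. I would therefore take
\[W_+ := \bigcup\{R_{k,l}: R_{k,l}\subset A_n,\; k\geq \lceil r/2\rceil\},\qquad W_-:=\bigcup\{R_{k,l}: R_{k,l}\subset A_n,\; k\leq -\lceil r/2\rceil-1\}.\]

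The second step is the area estimate. Consider the radial sector $S_+ := \{z\in A_n : \mathrm{Re}(z)\geq r/2\}$. In polar coordinates $z=\rho e^{i\theta}$, the angular measure of $\{\theta : \rho\cos\theta\geq r/2\}$ is $2\arccos(r/(2\rho))$, and since $\rho\geq r$ forces $r/(2\rho)\leq 1/2$, we have $\arccos(r/(2\rho))\geq \pi/3$. Hence
\[\lambda(S_+) \;=\; \int_r^{er} 2\arccos\bigl(r/(2\rho)\bigr)\,\rho\,d\rho \;\geq\; \frac{2\pi}{3}\int_r^{er}\rho\,d\rho \;=\; \frac{\pi(e^2-1)r^2}{3} \;=\; \frac{\lambda(A_n)}{3},\]
and $\lambda(S_-)=\lambda(S_+)$ by rotational symmetry of the annulus.

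Finally, I would compare $W_+$ with $S_+$. By construction every rectangle contributing to $W_+$ lies inside $S_+$, and the only portion of $S_+$ not covered is contained in a unit-width tubular neighborhood of $\partial S_+$. The boundary $\partial S_+$ is a finite union of circular arcs (of radius $r$ or $er$) and vertical segments, whose total length is $O(r)$; the corresponding tubular neighborhood therefore has area $O(r)$ as well. Consequently $\lambda(W_+)\geq \lambda(S_+)-Cr$ for some absolute constant $C$, and since $\lambda(A_n)=\pi(e^2-1)r^2$ is quadratic in $r$, the inequality $\lambda(W_+) > \lambda(A_n)/4$ holds once $n\geq N_0$ is taken sufficiently large; the argument for $W_-$ is identical. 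The only mildly delicate point is this last boundary bookkeeping, but it is routine, the perimeter of $S_\pm$ growing only linearly in $r$ while $\lambda(A_n)$ grows quadratically.
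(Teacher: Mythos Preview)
Your argument is correct and follows essentially the same route as the paper's proof: identify the sector $S_\pm=\{z\in A_n:\pm\Re z\geq r/2\}$, observe it carries at least one third of the area of $A_n$, and then note that for large $n$ it can be filled with fundamental rectangles lying entirely in $A_n$ up to a boundary layer of area $O(r)$, yielding more than $\lambda(A_n)/4$. The paper states this in three lines without the polar computation or the explicit boundary estimate; your version simply supplies those details. One small slip: the ``unit-width'' neighborhood should really have width comparable to the diameter $\sqrt{1+4\pi^2}$ of a fundamental rectangle (since the rectangles have height $2\pi$), but this constant does not affect the $O(r)$ bound, so the conclusion stands.
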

\begin{proof}
For an annulus centered at $0$ with inner radius $r$, $1/3$ of its
area belongs to the half-plane $\Re z > r/2$ and another $1/3$ to
$\Re z < - r/2$. For $A_n$ with $n$ large enough, almost the entire
area, certainly more than $1/4$ of the are of the whole annulus, of
$A_n \cap \{ z :\: \Re z > |\Lambda| \exp n  \}$ can be filled with
fundamental rectangles. This defines $W_+$. The set $W_-$ is
constructed in the same way.
\end{proof}
The following lemma generalizes Lemma~\ref{lem:10zp,2}.
\begin{lem}\label{lem:10zp,3}
There are constants $N_1$ and $K_0>1$ such that for all $n\geq N_1$
and any integer $p\geq 1$,  there is a set $W_p \subset A_n$ such
that:
\begin{itemize}
\item
$W_p$ is the union of sets each of which is mapped by $f^{p-1}$
univalently onto a fundamental rectangle,
\item
for every $z\in W_p$ and $0<j<p$, $f^j(z) \in A_m$ with $m\geq n$,
while $f^p(z)\in A_m$ with $m\geq g^p(|\Lambda|e^n)$,
\item
$\lambda(W_p) \geq K_0^{-p}$.
\end{itemize}
\end{lem}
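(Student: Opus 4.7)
I would argue by induction on $p$, with $n$ fixed. For the base case $p=1$, take $W_1 := W_+$ supplied by Lemma~\ref{lem:10zp,2}: the first property is trivial since $f^0 = \mathrm{id}$, the second records that $f(W_+) \subset \bigcup_{m \geq g(|\Lambda|e^n)}A_m$ which is the $g^1(|\Lambda|e^n)$ bound, and $\lambda(W_1) \geq \lambda(A_n)/4 \geq K_0^{-1}$ once $K_0$ is chosen large enough relative to $\lambda(A_{N_1})$.

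For the inductive step, pick any connected component $R$ of $W_p$. By the first property of $W_p$, $f^{p-1}$ maps $R$ univalently onto a fundamental rectangle $F_R$, so $f^p(R) = f(F_R) = A_{k(R)}$ for some integer $k(R) \geq g^p(|\Lambda|e^n)$, which exceeds $N_0$ provided $N_1$ is chosen sufficiently large. I apply Lemma~\ref{lem:10zp,2} inside $A_{k(R)}$ to obtain $W_+(k(R)) \subset A_{k(R)}$, a union of fundamental rectangles of area $\geq \lambda(A_{k(R)})/4$, each mapped by $f$ into annuli of index at least $g(|\Lambda|e^{k(R)}) \geq g^{p+1}(|\Lambda|e^n)$. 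Then set $V(R) := (f^p|_R)^{-1}(W_+(k(R)))$ and $W_{p+1} := \bigcup_R V(R)$. The first two required properties of $W_{p+1}$ are built into the construction: each component of $V(R)$ is mapped by $f^p$ univalently onto a fundamental rectangle, orbit control at times $0<j<p$ is inherited from $W_p$, and the conditions at $j=p$ and $j=p+1$ follow from the properties of $W_+(k(R))$.

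The only real work is the area bound. Writing $\phi := (f^p|_R)^{-1}\colon A_{k(R)} \to R$, a change of variables gives
\[\frac{\lambda(V(R))}{\lambda(R)} = \frac{\int_{W_+(k(R))}|\phi'|^2\,d\lambda}{\int_{A_{k(R)}}|\phi'|^2\,d\lambda} \geq \frac{1}{4D^2}, \qquad D := \frac{\sup_{A_{k(R)}}|\phi'|}{\inf_{A_{k(R)}}|\phi'|},\]
so if $D$ is bounded by an absolute constant, then $\lambda(W_{p+1}) \geq \lambda(W_p)/(4D^2)$ and the induction closes with $K_0 := 4D^2$ (taken large enough to absorb the base case). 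The main obstacle is exactly this uniform distortion bound, since the crude per-step estimate loses a factor of $e$ at every pullback and yields only $D \leq e^p$, which is useless. To do better, I would track the backward orbit $S_j := f^{p-j}(R)$ for $j = 0,1,\ldots,p$. We have $S_0 = A_{k(R)}$, $S_1 = F_R$ has bounded diameter, and for $j \geq 2$ the pullback $S_{j-1} \to S_j = f^{-1}(S_{j-1})$ contracts by the factor $\sup_{S_{j-1}}1/|w| \leq (|\Lambda|e^n)^{-1}$, since $S_{j-1}$ lies in some annulus $A_m$ with $m \geq n$. Hence $\diam S_j$ decays geometrically in $j$ once $j \geq 2$. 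Expanding $\log|\phi'(w)| = -\sum_{j=0}^{p-1}\log|f^{-j}(w)|$, the log-variation of each term across $w \in A_{k(R)}$ is bounded by $\diam S_j / \inf_{S_j}|z|$: this contributes $\log e = 1$ at $j=0$ and $\leq C/(k(R)(|\Lambda|e^n)^{j-1})$ for $j \geq 1$. The resulting geometric series sums to $\leq 1 + O(k(R)^{-1})$ uniformly in $p$, so $D$ is bounded by an absolute constant and the induction closes.
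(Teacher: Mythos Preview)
Your argument is correct and follows the same inductive scheme as the paper: start from Lemma~\ref{lem:10zp,2}, and at each step pull back the new $W_+$ through $f^p$, using a uniform distortion bound to transfer the area proportion. The structure, the base case, and the verification of the first two bullet points are essentially identical to the paper's.

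The one substantive difference is in how the distortion bound is obtained. The paper argues in one line: since the post-singular set $P(f)$ is contained in $D(0,|\Lambda|e^{N_1-1})$ while every intermediate image $S_j$ lies in $\bigcup_{m\geq n}A_m$, the inverse branch $f^{-(p-1)}$ from the fundamental rectangle $F_R$ extends univalently to a definite enlargement, and Koebe's distortion theorem gives a bound on $f^{-(p-1)}$ independent of $p$; the last step $f\colon F_R\to A_m$ contributes the explicit factor $e$. Your route is instead a direct computation exploiting the identity $f'=f$, writing $\log|\phi'|$ as a telescoping sum and bounding its oscillation by a geometric series in the shrinking diameters of the $S_j$. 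This is more elementary (no Koebe) and gives the sharper estimate $\log D \leq 1 + O(k(R)^{-1})$, but is specific to the exponential family; the paper's argument is the standard one and would transfer unchanged to other maps with bounded post-singular set. One small point to tidy in your write-up: the Lipschitz step $\diam S_j \leq (\inf_{S_{j-1}}|w|)^{-1}\diam S_{j-1}$ uses convexity of $S_{j-1}$, which holds for $j=2$ since $S_1=F_R$ is a rectangle, and for $j\geq 3$ follows because $S_{j-1}$ already has small diameter and sits far from $0$, so the logarithm branch extends to a disk containing it.
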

\begin{proof}
Choose $N_1$ at least as large as $N_0$ in Lemma~\ref{lem:10zp,2}
and so large that $g(|\Lambda|e^{N_1}) \geq |\Lambda|e^{N_1}$.
Additionally, the orbit of $0$ must fit inside
$D(0,|\Lambda|\exp(N_1-1))\;$. For $p=1$ the claim follows from
Lemma~\ref{lem:10zp,2}. Assuming now the claim for some $p\geq 1$,
we can first split the set $W_p$ into $W_p^m$, $A_m\subset \{
z\in\CC :\: |z| \geq g^p(|\Lambda|e^n)\}$, defined by $W_p^m = \{
z\in W_p :\: f^p(z) \in A_m\}$. The set $W_p^m$ splits into the
union of topological disks each of which is initially mapped by
$f^{p-1}$ univalently onto a fundamental rectangle and then by $f$.
Since by our choice of $N_1$ the post-singular set is far away
surrounded by $A_{N_1-1}$, the first map has distortion bounded
independently of $m,p$ and the distortion of $f$ satisfies the
explicit bound of $e$. Let $Q>1$ bound the ratio of the squares of
the derivatives for any branch of $f^{-p}$ from $A_m$ into $W_p^m$
at any two points of $A_m$. Since $m\geq N_1 \geq N_0$ by our choice
of $N_1$, inside $A_m$ we can find $W$ given by
Lemma~\ref{lem:10zp,2} and then define $W_{p+1}^m := W_p^m \cap
f^{-p}(W)$. As a consequence of the bounded distortion of $f^p$ and
Lemma~\ref{lem:10zp,2},\, $
\frac{\lambda(W_{p+1}^m)}{\lambda(W_p^m)} \geq (4Q)^{-1} \;$.  Now
set $W_{p+1} = \bigcup_{m\geq g^p(n)} W_{p+1}^m$. Then
$\lambda(W_{p+1}) \geq (4Q)^{-1} \lambda(W_p)$ so with $K_0 = 4Q$
the last claim of Lemma~\ref{lem:10zp,2} will persist under
induction. The remaining claims follow immediately from
Lemma~\ref{lem:10zp,2} and the construction of $W_{p+1}$.
\end{proof}
\begin{prop}\label{prop:13za,1}
There exist constants $N_2$ and $K_0, K_1>1$ such that for each
$n\geq N_2$ and $p\geq 1$, $A_n$  contains a subset $V_p$, such that
$V_p$ are pairwise disjoint for different $p$ and for every $z\in
V_p$, $|f^i(z)| \geq |\Lambda| e^n$  for $i=0,\cdots,p$ while
$|f^{p+1}(z)| \leq g(-g^p(|\Lambda|e^n))$. Additionally, for each
$p$, $\lambda(V_p) \geq K_1^{-1} K_0^{-p} \lambda(A_n).$
\end{prop}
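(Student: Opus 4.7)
The plan is to perform exactly one more step of the inductive construction from Lemma~\ref{lem:10zp,3}, this time using the \emph{inward}-mapped subset $W_-$ from Lemma~\ref{lem:10zp,2} in place of the outward set $W_+$. Set $N_2 := N_1$ (any enlargement needed will be absorbed into the final constants).

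Given $n \geq N_2$ and $p \geq 1$, I would take $W_p \subset A_n$ from Lemma~\ref{lem:10zp,3} and split $W_p = \bigcup_m W_p^m$ according to which annulus $A_m$ contains $f^p(z)$, i.e.\ $W_p^m := \{z \in W_p : f^p(z) \in A_m\}$. Each $W_p^m$ is a disjoint union of topological disks on each of which $f^p$ is univalent onto $A_m$ with distortion bounded by some constant $Q$ depending only on $N_2$; this is the same bounded distortion estimate already used in the proof of Lemma~\ref{lem:10zp,3}, combining the Misiurewicz hyperbolicity of the post-singular set with the explicit $e$-bound on the distortion of $f$ across a single fundamental rectangle. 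Inside each $A_m$ I then pick the set $W_-(m) \subset A_m$ provided by Lemma~\ref{lem:10zp,2}, set $V_p^m := W_p^m \cap f^{-p}(W_-(m))$, and finally $V_p := \bigcup_m V_p^m$.

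The three required properties then follow quickly. The orbit bound $|f^i(z)| \geq |\Lambda|e^n$ for $i = 0, \ldots, p$ is inherited verbatim from $W_p$; and for $z \in V_p^m$ the membership $f^p(z) \in W_-(m)$ yields $|f^{p+1}(z)| \leq g(-|\Lambda|e^m) \leq g(-g^p(|\Lambda|e^n))$, using $|\Lambda|e^m \geq g^p(|\Lambda|e^n)$ together with the monotonicity of $y \mapsto g(-y) = |\Lambda|e^{-y/2}$. For the area estimate, bounded distortion of $f^p$ combined with $\lambda(W_-(m)) \geq \frac{1}{4}\lambda(A_m)$ gives $\lambda(V_p^m) \geq (4Q)^{-1}\lambda(W_p^m)$, so summing over $m$ and invoking the bound of Lemma~\ref{lem:10zp,3} produces $\lambda(V_p) \geq K_1^{-1} K_0^{-p} \lambda(A_n)$ after absorbing constants. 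Pairwise disjointness reduces to the observation that the first-escape time $\tau(z) := \min\{i \geq 1 : |f^i(z)| < |\Lambda|e^n\}$ equals $p+1$ identically on $V_p$ (it is at least $p+1$ by the orbit bound, and at most $p+1$ because $g(-g^p(|\Lambda|e^n)) < |\Lambda|e^n$ is automatic), and $\tau$ depends only on $z$.

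I do not expect any serious obstacle: the argument is really a one-step extension of Lemma~\ref{lem:10zp,3}. The only thing that needs monitoring is that the distortion constant $Q$ stays uniform across all $m$ and $p$, but this uniformity is inherited from the earlier lemma via the Misiurewicz assumption, since the orbits $f^i(W_p^m)$ for $0 < i < p$ remain outside $D(0, |\Lambda|e^n)$ while the post-singular set lies in $D(0, |\Lambda|e^{N_1 - 1})$, so all inverse branches involved stay at positive hyperbolic distance from $P(f)$.
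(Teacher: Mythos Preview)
Your proposal is correct and follows essentially the same route as the paper: take $W_p$ from Lemma~\ref{lem:10zp,3}, pull back the inward set $W_-$ of Lemma~\ref{lem:10zp,2} through the bounded-distortion map $f^p$ on each piece, and read off the orbit, area, and disjointness properties. The only cosmetic difference is that the paper indexes the pieces by the individual preimage disks $U_j$ while you index by the target annulus $A_m$, and the paper spells out the mild extra conditions on $N_2$ (in particular $N_2\ge 1$) that you defer with ``absorbed into the final constants.''
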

\paragraph{\bf Proof of the Proposition.}
We choose $N_2$ at least equal to $N_1$ from Lemma~\ref{lem:10zp,3},
such that $g(|\Lambda| e^n) \geq |\Lambda| e^n$ if $n\geq N_2$ and
so big that the orbit $0$ fits inside $D(0,|\Lambda|e^{N_2-1})$  and
at least $1$. By the last choice, the pairwise disjointness of sets
$V_p$ will follow automatically from the conditions on orbits from
$V_p$. Consider first the set $W_p$ obtained from
Lemma~\ref{lem:10zp,3}. It consists of sets $U_j$ which are
univalent preimages of fundamental rectangles, each of which is
mapped with bounded distortion onto $A_m \subset \{ z\in\CC :\: |z|
\geq g^p(|\Lambda|e^n)\}$. Thus, a portion of $U_j$ of area at least
$K_1^{-1}\lambda(U_j)$ with $K_1$ a constant, is occupied by the
preimage by $f^p$ of the set $W_-$ from Lemma~\ref{lem:10zp,2}. It
is immediate that every $z$ from this preimage satisfies the demands
of Proposition~\ref{prop:13za,1}. $V_p$ is the union of such
preimages for all $U_j$ and hence its measure is bounded below as
claimed in the Proposition.\\
\paragraph{\bf Proof of Theorem~\ref{theo:10zp,1}.}
\begin{lem}\label{lem:13za,1}
For all $x\geq N_3$ for some $N_3$ and every $\gamma>0$,
$\lim_{p\rightarrow\infty} g^p(x)\gamma^{-p} = +\infty \;$.
\end{lem}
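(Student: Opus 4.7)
The plan is to exploit the extremely rapid, doubly-exponential growth of $g(x)=|\Lambda|e^{x/2}$, which will outpace any fixed geometric rate $\gamma^p$. The strategy has two parts: first I would show that the iterates $g^p(x)$ escape to $+\infty$, and second that once $g^p(x)$ is large, the next iterate multiplies it by an arbitrarily large factor, in particular by more than $\gamma$.

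First, I would pick $N_3$ large enough that $g(x)>x$ for every $x\geq N_3$; this is possible because $|\Lambda|e^{x/2}/x\to\infty$ as $x\to\infty$. Since $g$ is strictly increasing, the sequence $(g^p(x))_{p\geq 0}$ is monotone nondecreasing for $x\geq N_3$. As $g$ has no fixed point in $[N_3,\infty)$ by the choice of $N_3$, this sequence cannot converge to a finite limit, and hence $g^p(x)\to+\infty$.

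Now fix an arbitrary $\gamma>0$. Since $g(y)/y=|\Lambda|e^{y/2}/y\to\infty$ as $y\to\infty$, there exists $Y_\gamma$ such that $g(y)\geq 2\gamma\, y$ whenever $y\geq Y_\gamma$. By the previous step, choose $p_0$ so that $g^{p_0}(x)\geq Y_\gamma$. Then inductively $g^{p_0+k}(x)\geq(2\gamma)^k g^{p_0}(x)$, whence
\[
\frac{g^{p_0+k}(x)}{\gamma^{p_0+k}}\ \geq\ \frac{2^k\, g^{p_0}(x)}{\gamma^{p_0}}\ \longrightarrow\ +\infty.
\]

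The feature that makes the statement clean is precisely that $N_3$ can be chosen independently of $\gamma$: escape to infinity depends only on where $g$ exceeds the identity, while $\gamma$ influences only how many iterations are needed before the effective multiplicative rate $2\gamma$ activates. I do not anticipate a substantive obstacle; the only care needed is to keep the role of $N_3$ (fixed, selected once) separate from $p_0$ (which is allowed to depend on $\gamma$).
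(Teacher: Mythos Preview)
Your argument is correct and follows essentially the same idea as the paper: both exploit that $g(y)/y\to\infty$, so eventually the iterates grow faster than any fixed geometric rate. The paper's version is marginally slicker in that it shows the consecutive ratio $g^{p+1}(x)/g^p(x)=\exp\bigl((g^p(x)-g^{p-1}(x))/2\bigr)\to\infty$ directly (having first obtained $g^p(x)\geq 2^p x$ from $g(x)\geq 2x$), which handles all $\gamma$ at once without the auxiliary threshold $Y_\gamma$; your version trades this explicit computation for the cleaner monotone-divergence step. Either way the substance is the same.
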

\begin{proof}
Evidently, $g(x)/x$ tends to $\infty$, so pick $N_3$ so that for all
$x\geq N_3$, $g(x) \geq 2x$. Then $g^{p}(x) \geq 2^p x$ for all
$p\geq 1$, in particular $g^p(x) - g^{p-1}(x) \rightarrow +\infty$.
But $\frac{g^{p+1}(x)}{g^p(x)} = \exp(g^p(x)-g^{p-1}(x)) \;$.
\end{proof}
Consider a slit annulus $A_n$ for $n$ at least equal to the constant
$N_2$ of Proposition~\ref{prop:13za,1} and $|\Lambda|e^n \geq N_3$
of Lemma~\ref{lem:13za,1}. Let $\tau(z)$ for $z\in A_n$ be the first
return time to $A_n$. Note that $\mu$-almost every point returns
since open sets return and $\mu$ is ergodic. Clearly $\tau$ is
$\mu$-integrable, but then also $\lambda$-integrable in view of
Lemma~\ref{lem:10zp,1}. Similarly, $\lambda$-almost every point
returns. If $z\in D(0,r)$ then it takes at least $k \geq K_2 \log
r^{-1}$ for $f^k(z)$ to get in the distance at least $1$ away from
the orbit of $0$. $K_2$ is  a positive constant which depends on the
maximum modulus of the derivative of $f$ on some compact set. It
follows that on each set $V_p$ from Proposition~\ref{prop:13za,1},
the return time is at least $K_2(\log |\Lambda|+g^p(|\Lambda|e^n))$.
Since the measure of $V_p$ is only exponentially small with $p$, by
Lemma~\ref{lem:13za,1}, the return time is not $\lambda$-integrable
which gives us the final contradiction.

\end{document}